\numberwithin{equation}{section}
\newtheorem{cor}[equation]{Corollary}
\newtheorem{lm}[equation]{Lemma}
\theoremstyle{definition}
\theoremstyle{remark}
\begin{document}
\date{\today}
\title{R\'{e}nyi's Parking Problem Revisited}
\author{Matthew P. Clay and Nandor J. Simanyi}
\address[Matthew P. Clay]{Georgia Institute of Technology\\
School of Aerospace Engineering\\
270 Ferst Drive Atlanta GA 30332}
\address[N\'andor Sim\'anyi]{The University of Alabama at Birmingham\\
Department of Mathematics\\
1300 University Blvd., Suite 452\\
Birmingham, AL 35294}

\email[Matthew P. Clay]{mclay6@gatech.edu}
\email[Nandor J. Simanyi]{simanyi@uab.edu}

\begin{abstract}
R\'enyi's parking problem (or $1D$ sequential interval packing problem) dates back to 1958, when R\'enyi studied the following 
random process: Consider an interval $I$ of length $x$, and sequentially and randomly pack disjoint unit intervals in $I$ until 
the remaining space prevents placing any new segment. The expected value of the measure of the covered part of $I$ is $M(x)$, so 
that the ratio $M(x)/x$ is the expected filling density of the random process. Following recent work by Gargano 
{\it et al.} \cite{GWML(2005)}, we studied the discretized version of the above process by considering the 
packing of the $1D$ discrete lattice interval $\{1,2,\dots ,n+2k-1\}$ with disjoint blocks of $(k+1)$ integers but, as opposed to the 
mentioned \cite{GWML(2005)} result, our exclusion process is symmetric, hence more natural. Furthermore, we were able to 
obtain useful recursion formulas for the expected number of $r$-gaps ($0\le r\le k$) between neighboring blocks. We also 
provided very fast converging series and extensive computer simulations for these expected numbers, so that the limiting 
filling density of the long line segment (as $n\to \infty$) is R\'enyi's famous parking constant, $0.7475979203\dots$.
\end{abstract}

\maketitle

\parskip=0pt plus 2.5pt

\bigskip \bigskip

\section{Introduction}

\bigskip \bigskip

R\'enyi's Parking Problem (or $1D$ sequential interval packing problem) dates back to 1958 when R\'enyi \cite{R(1958)}
studied the probabilistic properties of the following random process: Consider an interval $I$ of length $x>>1$ ($x$ will 
eventually tend to infinity), and sequentially and randomly pack disjoint unit intervals in $I$ 
as long as the remaining space permits placing any new unit segment in $I$. At each step of the packing process the position of the newly placed interval 
is chosen uniformly from the available space. Denote the expected value of the measure of the covered part by $M(x)$, so 
that the ratio $M(x)/x$ is the expected filling density of the ``parking process". (The interval $I$ is the street curb, and 
the packed unit segments are the parked cars.) R\'enyi himself proves the following continuous recursion for $M(x)$
\begin{equation}\label{integral-recursion}
M(x)=\begin{cases}
0 \text{ for } 0\le x<1 \\
1+\frac{2}{x-1}\int_0^{x-1} M(y)dy \text{ for } x\ge 1,
\end{cases}
\end{equation}
and from this he deduces the asymptotic mean filling density
\begin{equation}\label{renyi-constant}
m=\lim_{x\to\infty} M(x)/x=\int_0^\infty \exp\left[-2\int_0^x \frac{1-e^{-y}}{y}dy\right]dx
=0.7475979203\dots ,
\end{equation}
which number is now known as R\'enyi's Parking Constant. R\'enyi \cite{R(1958)} further proves the asymptotic formula
\begin{equation}\label{asymptotics-1}
M(x)=mx+m-1+\mathcal{O}(x^{-n})
\end{equation}
for every positive integer $n$, which was further improved by Dvoretzky and Robbins \cite{DR(1964)} to
\begin{equation}\label{asymptotics-2}
M(x)=mx+m-1+\mathcal{O}\left[\left(\frac{2e}{x}\right)^{x-3/2}\right].
\end{equation}
In that paper Dvoretzky and Robbins also prove that
\[
\inf_{x\le t\le x+1}\frac{M(t)+1}{t+1}\le m\le 
\sup_{x\le t\le x+1}\frac{M(t)+1}{t+1}.
\]

The first ``discretized" version of the problem, namely the expected density derived from sequential packings of 
non-overlapping neighboring pairs of integer points, i.e., edges or bonds, selected at random on a long segment of a $1D$ 
lattice was first given by Page \cite{P(1959)}. His results have been confirmed and extended in various ways by Downton
\cite{D(1961)}, Mackenzie \cite{M(1962)}, Widom \cite{W(1966)}, and Solomon \cite{S(1967)}. This random sequential addition 
model is pertinent when molecules are sequentially absorbed, and once absorbed, are fixed. The expected density derived by
non-sequentially packing disjoint, indistinguishable, neighboring pairs of points on a linear lattice, each configuration 
being considered equally likely, was first given by Jackson and Montroll \cite{JM(1958)}, and extended to neighboring 
triplets by Fisher and Temperly \cite{FT(1960)}.

Following a more recent paper by Gargano {\it et al.} \cite{GWML(2005)} 
we studied the discretized version 
of the above process by considering the sequential packing of the $1D$ discrete lattice interval
$\{1,2,\dots,n+2k-1\}$ ($n>>1$) with disjoint blocks of $k+1$ consecutive integers but, as opposed to the approach in
\cite{GWML(2005)}, our packing process is symmetric, hence more natural. Furthermore, we were able to obtain useful 
recursions for the expected number of $r$-gaps ($0\le r\le k$) between neighboring blocks (cars). The construction of such a 
recursion is one of the open problems listed at the end of \cite{GWML(2005)}.

We also provided very fast (faster than any exponential) converging series for the expected number of $r$-gaps, and carried 
out extensive computer simulations for these expected numbers, indicating that the limiting filling density is indeed 
R\'enyi's famous parking constant $m=0.7475979203\dots$, also in the discrete parking problem.

It has to be noted, however, that our approach differs slightly, albeit just in minor technical terms, from the approach in
\cite{GWML(2005)}. Namely, instead of considering packings with disjoint $(k+1)$-blocks of consecutive integer lattice 
points, i.e., with $k$ consecutive edges or bonds between them, we consider the positions of the centers of these blocks. 
The available space for the centers is either the original integer lattice (when $k$ is even) or the original lattice 
shifted by $1/2$ units (when $k$ is odd), so that the distance between neighboring centers is always at least $k+1$, 
i.e., the gap between them contains at least $k$ points. This approach is clearly equivalent to that of \cite{GWML(2005)}.

Finally, in the paper \cite{GWML(2005)} the authors consider the process in which not only the distances between neighboring 
centers of blocks are at least $k+1$ but, in addition, each center is distanced at least $k+2$ from at least one of its 
two neighbors (an asymmetric model). At the end of their paper among the open problems they list the need to study the 
symmetric model in which we drop the second lower bound requirement ($\ge k+2$). This is exactly the kind of model we are 
investigating in this paper.

\bigskip \bigskip

\section{The Model}

\bigskip \bigskip

For incoming cars (i.e. centers of $(k+1)$-blocks of consecutive
integers as described in the introduction) there are $n+k-1$ parking
slots, labelled as $1,2,\dots,n+k-1$ ($n, k\ge 1$), in a row that the
cars, arriving one-by-one, want to occupy. The drivers have the desire
that the distance between occupied parking slots is at least $k+1$,
i.e., the gap between neighboring cars (and also the gap before the
first car and after the last one) contains at least $k$ unoccupied
slots. ($k$ being a fixed positive integer.)
When a new car arrives, the driver considers all available slots and occupies one of them with equal probability.
The process lasts as long as the cars can occupy parking slots.

At the end of the process there will be gaps of sizes $k, k+1, \dots,2k$. For any $r$, $k\le r\le 2k$, and for any positive
integer $n$ let $a_n^{(r)}$ be the expected number of $r$-gaps produced by the above random process.

Since the events $A_i$ ($1\le i\le n-k-1$) that the first arriving car occupies the slot $i+k$ are equally probable,
pairwise exclusive, and their union is the sure event, one immediately gets the recursion formula
\begin{equation}
a_n^{(r)}=\frac{2}{n-k-1}\sum_{i=1}^{n-k-1}a_i^{(r)}
\end{equation}
for $n\ge k+2$. Furthermore, the initial conditions
\begin{equation}
a_n^{(r)}=
\begin{cases} 
1 \text{ if } n=r-k+1 \\
0 \text{ if } 1\le n\le k+1,\; n\ne r-k+1
\end{cases}
\end{equation}
hold true, $k\le r\le 2k$.

We take $s_n^{(r)}=\sum_{i=1}^n a_i^{(r)}$, $t_n^{(r)}=\dfrac{s_n^{(r)}}{n(n+2k+1)}$, so that
\[
s_n^{(r)}=s_{n-1}^{(r)}+\frac{2}{n-k-1}\cdot s_{n-k-1}^{(r)}
\]
and
\begin{equation}\label{t-recursion}
n(n+2k+1)t_n^{(r)}=(n-1)(n+2k)t_{n-1}^{(r)}+2(n+k)t_{n-k-1}^{(r)}
\end{equation}
for $n\ge k+2$, $k\le r\le 2k$. 

For $n\ge 2$ define $u_n^{(r)}=t_n^{(r)}-t_{n-1}^{(r)}$. From (\ref{t-recursion}) elementary calculation yields
the $k$-step linear recursion
\begin{equation}\label{big-recursion}
u_n^{(r)}=\frac{-2(n+k)}{n(n+2k+1)}\cdot \sum_{i=1}^k u_{n-i}^{(r)}
\end{equation}
for $n\ge k+2$.
The initial values $\left\{u_n^{(r)}\big|\; 2\le n\le k+1\right\}$
for the $\left(u_n^{(r)}\right)_{n=2}^\infty$ sequence are as follows:

\medskip

\begin{equation}\label{u-initial}
u_n^{(r)}=
\begin{cases}
0 &\text{ if } 2\le n\le r-k \\
\frac{1}{(r-k+1)(r+k+2)} &\text{ if }n=r-k+1\text{ and }r\ge k+1 \\
\frac{1}{n(n+2k+1)}-\frac{1}{(n-1)(n+2k)} &\text{ if } r-k+2\le n\le k+1.
\end{cases}
\end{equation}

\medskip \medskip \medskip

The following formulas are immediate consequences of the definitions of the involved quantities.

\medskip

\begin{equation}\label{t-formula}
t_n^{(r)}=\frac{s_1^{(r)}}{2k+2}+\sum_{i=2}^n u_i^{(r)}, \;\; n\ge 1,
\end{equation}
where
\[
s_1^{(r)}=
\begin{cases}
1 &\text{ if } r=k \\
0 &\text{ if } k<r\le 2k.
\end{cases}
\]

\medskip

\begin{equation}\label{s-formula}
s_n^{(r)}=n(n+2k+1)t_n^{(r)}, \;\; n\ge 1,
\end{equation}

\medskip

\begin{equation}\label{a-formula}
a_n^{(r)}=s_n^{(r)}-s_{n-1}^{(r)}, \;\; n\ge 1,
\end{equation}
where $s_0^{(r)}=0$ by convention.

\bigskip

\begin{cor}
The limiting densities
\begin{equation}
D(k,r)=(r+1)\lim_{n\to\infty}\frac{a_n^{(r)}}{n}=2(r+1)t_\infty^{(r)}
\end{equation}
exist for all $r$, $k\le r\le 2k$. Clearly
\begin{equation}\label{normalization}
\sum_{r=k}^{2k} D(k,r)=1.
\end{equation}
\end{cor}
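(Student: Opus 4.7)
The plan is to first establish the existence of $t_\infty^{(r)}=\lim_{n\to\infty}t_n^{(r)}$ for each $r\in\{k,k+1,\dots,2k\}$. By the representation (\ref{t-formula}), this amounts to absolute convergence of the series $\sum_{i\ge 2}u_i^{(r)}$. The $k$-step recursion (\ref{big-recursion}) has a prefactor of order $1/n$: setting $V_n=\max_{n-k+1\le j\le n}|u_j^{(r)}|$, one obtains
\[
|u_{n+1}^{(r)}|\le\frac{2k(n+1+k)}{(n+1)(n+2k+2)}\,V_n,
\]
and the fraction in front tends to $0$. Hence for any $\lambda\in(0,1)$ the bound $V_{n+1}\le\lambda V_n$ holds for all sufficiently large $n$, so $V_n$ decays faster than any geometric rate. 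This gives absolute (indeed super-exponential) convergence of the series, and therefore the existence of $t_\infty^{(r)}$.

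Next, to identify the limit of $a_n^{(r)}/n$, I would use the compact form of the recursion that appears just below (\ref{t-recursion}). Subtracting $s_{n-1}^{(r)}$ from both sides of that identity gives $a_n^{(r)}=\frac{2}{n-k-1}s_{n-k-1}^{(r)}$, which when combined with (\ref{s-formula}) simplifies to
\[
a_n^{(r)}=2(n+k)\,t_{n-k-1}^{(r)}\qquad(n\ge k+2).
\]
Dividing by $n$ and letting $n\to\infty$ immediately gives $a_n^{(r)}/n\to 2t_\infty^{(r)}$, so $D(k,r)=(r+1)\lim_n a_n^{(r)}/n=2(r+1)\,t_\infty^{(r)}$ exists.

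For the normalization (\ref{normalization}) I would apply a pathwise counting identity. Any terminal configuration with $N$ cars on the $n+k-1$ slots consists of the $N$ occupied slots plus exactly $N+1$ gaps (counting the two end gaps), and the total gap length is therefore $n+k-1-N$. Taking expectations of these two counts and adding yields
\[
\sum_{r=k}^{2k}(r+1)\,a_n^{(r)}=(n+k-1-\BE[N])+(\BE[N]+1)=n+k,
\]
valid for every $n\ge 1$. Dividing by $n$ and passing to the limit produces $\sum_{r=k}^{2k}D(k,r)=1$.

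The main technical obstacle is the convergence argument of the first paragraph: what is needed is the quantitative decay of $u_n^{(r)}$, not merely $|u_n^{(r)}|\to 0$. The super-exponential bound extracted from (\ref{big-recursion}) handles this with plenty of room to spare. Once $t_\infty^{(r)}$ is known to exist, the identity $a_n^{(r)}=2(n+k)t_{n-k-1}^{(r)}$ together with the elementary conservation argument of the third paragraph completes the proof.
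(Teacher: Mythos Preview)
Your argument is correct and follows the paper's line: both rest on super-exponential decay of $|u_n^{(r)}|$ (the paper via Lemma~\ref{w-upper-bound} and Corollary~\ref{fast-u-convergence}) to obtain the existence of $t_\infty^{(r)}$, and then pass to $a_n^{(r)}/n\to 2t_\infty^{(r)}$. One small slip: the claim $V_{n+1}\le\lambda V_n$ is not literally true, since $V_{n+1}$ still contains the old entries $|u_{n-k+2}^{(r)}|,\dots,|u_n^{(r)}|$, each of which may equal $V_n$. What you do get is $V_{n+1}\le V_n$ and, after $k$ steps, $V_{n+k}\le\lambda V_n$; this is still more than enough for absolute convergence and for the stated faster-than-geometric decay.

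Two points where your write-up differs from (and slightly improves on) the paper. First, you compute the limit via the \emph{exact} identity $a_n^{(r)}=2(n+k)\,t_{n-k-1}^{(r)}$, so only bare convergence $t_m^{(r)}\to t_\infty^{(r)}$ is needed; the paper instead writes $a_n^{(r)}=s_n^{(r)}-s_{n-1}^{(r)}$ with $s_m^{(r)}=m(m+2k+1)t_\infty^{(r)}+\mathcal{O}(a^m)$, a route that genuinely requires a rate on $t_m^{(r)}-t_\infty^{(r)}$ to control the difference of two $O(m^2)$ terms. Second, the paper dismisses the normalization~(\ref{normalization}) with ``Clearly'' and gives no argument; your pathwise count $\sum_{r}(r+1)a_n^{(r)}=n+k$ supplies exactly the missing line.
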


\begin{proof}
According to the previous corollary $t_n^{(r)}=t_\infty^{(r)}+\mathcal{O}(a^n)$ with an arbitrarily small constant $a>0$.
Therefore
\[
s_n^{(r)}=n(n+2k+1)[t_\infty^{(r)}+\mathcal{O}(a^n)]=n(n+2k+1)t_\infty^{(r)}+\mathcal{O}(a^n),
\]
and
\[
a_n^{(r)}=s_n^{(r)}-s_{n-1}^{(r)}=2(n+k)t_\infty^{(r)}+\mathcal{O}(a^n).
\]
\end{proof}

\bigskip

\section{Fundamental calculations}

\bigskip \bigskip

From \ref{big-recursion} for $n\ge k+2$ one gets
\[
|u_n^{(r)}|\le\frac{2(n+k)}{n(n+2k+1)}\cdot\sum_{i=1}^k |u_{n-i}^{(r)}| \\
\le \frac{2k}{n}\left(\frac{1}{k}\sum_{i=1}^k |u_{n-i}^{(r)}|\right),
\]
so for the non-negative numbers $w_n^{(r)}=|u_n^{(r)}|$ ($n\ge 2$) one obtains
\begin{equation}\label{w-recursive-estimate}
w_n^{(r)}\le\frac{2k}{n}\left(\frac{1}{k}\sum_{i=1}^k w_{n-i}^{(r)}\right)
\end{equation}
for $n\ge k+2$.

\medskip

\begin{lm}\label{w-upper-bound}
For $n\ge 2$ write $n=pk+s$ with $p\ge 0$ and $2\le s\le k+1$. We claim that the inequality 
\begin{equation}\label{w-bound}
|u_n^{(r)}|=w_n^{(r)}\le \frac{M\cdot 2^p}{p!}
\end{equation}
holds true with the constant 
\[
M=M_{k,r}=\max\left\{w_n^{(r)}\big|\; 2\le n\le 2k\right\},
\]
depending only on $k$ and $r$.
\end{lm}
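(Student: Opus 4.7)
The plan is to prove the bound by strong induction on $n$, using the simplified form
\[
w_n^{(r)} \leq \frac{2}{n}\sum_{i=1}^k w_{n-i}^{(r)}, \qquad n \geq k+2,
\]
of the recursive estimate (\ref{w-recursive-estimate}), which follows immediately by observing that $(n+k)/(n+2k+1) \leq 1$. The base case is the range $2 \leq n \leq k+1$, where $p = 0$ and the bound $w_n^{(r)} \leq M = M \cdot 2^0/0!$ is immediate from the definition of $M$.

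For the inductive step, fix $n \geq k + 2$ with decomposition $n = pk + s$, $p \geq 1$, $2 \leq s \leq k + 1$. The key bookkeeping task is to determine how each predecessor $n - i$ (for $1 \leq i \leq k$) decomposes as $p'k + s'$ with $2 \leq s' \leq k + 1$. Writing $n - i = pk + (s - i)$, one verifies that for $i = 1, 2, \ldots, s - 2$ the $p$-value is preserved ($p' = p$ and $s' = s - i$), whereas for $i = s - 1, s, \ldots, k$ the $p$-value drops ($p' = p - 1$ and $s' = k + s - i$). These two ranges partition $\{1, \ldots, k\}$ into blocks of sizes $s - 2$ and $k - s + 2$ respectively, either of which may be empty.

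Applying the inductive hypothesis to each $w_{n-i}^{(r)}$ and inserting into the simplified recursion yields
\[
w_n^{(r)} \;\leq\; \frac{M \cdot 2^p}{n\, p!}\,\bigl[\,2(s-2) + p(k - s + 2)\,\bigr],
\]
so the desired estimate $w_n^{(r)} \leq M \cdot 2^p/p!$ reduces to the elementary inequality $2(s-2) + p(k - s + 2) \leq pk + s$. After cancellation this rearranges to $(p-1)(s-2) \geq -2$, which is clear because the left-hand side is non-negative for $p \geq 1$ and $s \geq 2$.

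The only delicate point in this argument is the case analysis for the $p'$-decomposition of the predecessors $n - i$; once the partition of $\{1, \ldots, k\}$ into the two index ranges is identified, the rest of the proof is a short algebraic calculation and I anticipate no further obstacle.
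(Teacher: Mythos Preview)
Your proof is correct, but it takes a different route from the paper's. The paper introduces an auxiliary majorant sequence $(w_n')$ defined by $w_n' = M$ for $2 \le n \le 2k$ and the same recursion $w_n' = \frac{2}{n}\sum_{i=1}^k w'_{n-i}$ for $n \ge 2k+1$; it then observes that $(w_n')$ is eventually strictly decreasing, which gives the clean one-step bound $w'_n \le \frac{2k}{n}\, w'_{n-k}$, and iterates this $p$ times to reach $M\cdot 2^p/p!$. Your argument, by contrast, works directly with $w_n^{(r)}$ via strong induction, splitting the predecessors $n-i$ according to whether their $p$-value is $p$ or $p-1$ and closing with the elementary inequality $(p-1)(s-2)\ge -2$. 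Your approach is somewhat more self-contained, since it avoids both the auxiliary sequence and the monotonicity claim (which the paper states without further justification); the trade-off is the small amount of bookkeeping in partitioning $\{1,\dots,k\}$ and checking the final inequality. Either way the argument is short.
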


\medskip

\begin{proof}
We define the auxiliary sequence $\left(w_n^{(r)}\right)'=w_n'$ ($n\ge 2$) with the following recursion:
\begin{equation}\label{w-prime-recursion}
w_n'=
\begin{cases}
M\text{ for } 2\le n\le 2k \\
\frac{2}{n}\cdot\sum_{i=1}^k w'_{n-i} \text{ for } n\ge 2k+1.
\end{cases}
\end{equation}
Since the expression on the right-hand-side of (\ref{w-recursive-estimate}) is monotone increasing in its variables
$w_{n-i}^{(r)}$ (the coefficients being positive), we immediately get the bounds
\begin{equation}\label{w-compare}
w_n^{(r)}\le w_n'
\end{equation}
for $n\ge 2$. Also, it is clear from the recursion (\ref{w-prime-recursion}) that 
$M>w'_{2k+2}>w'_{2k+3}>w'_{2k+4}>\dots$, so
\begin{equation}\label{k-step-estimate}
w'_n\le \frac{2k}{n}\cdot w'_{n-k}
\end{equation}
for $n\ge k+2$. By an obvious induction, the inequalities (\ref{k-step-estimate}) above yield
\begin{equation}
w'_n\le\frac{M(2k)^p}{n(n-k)(n-2k)\dots(n-pk+k)}\le\frac{M(2k)^p}{k^p p!}
=\frac{M\cdot 2^p}{p!},
\end{equation}
where $n=pk+s$ with $p\ge 0$ and $2\le s\le k+1$.
\end{proof}

\medskip

\begin{cor}\label{fast-u-convergence}
For the $n$-th error term 
\[
R_n=\sum_{i\ge n} u_i^{(r)}=t_\infty^{(r)}-t_{n-1}^{(r)}
\]
of the absolutely convergent series 
\begin{equation}\label{t-series}
t_\infty^{(r)}=\lim_{i\to \infty} t_i^{(r)}=\frac{s_1^{(r)}}{2k+2}+\sum_{i=2}^\infty u_i^{(r)}
\end{equation}
we have the superexponential upper bound
\begin{equation}\label{strong-bound}
|R_n|\le\frac{Mke^2\cdot 2^{p_n}}{p_n!},
\end{equation}
where $n=p_nk+s$ with $2\le s\le k+1$.
\end{cor}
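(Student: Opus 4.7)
The plan is to combine Lemma~\ref{w-upper-bound} with a triangle-inequality estimate on the tail of the series defining $R_n$, then reorganize the sum by the ``block index'' $p$, and finally bound the resulting tail of the exponential-type series $\sum_p 2^p/p!$ by its leading term.

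First I would apply the triangle inequality together with the bound from Lemma~\ref{w-upper-bound} to write
\[
|R_n|\;\le\;\sum_{i\ge n}|u_i^{(r)}|\;\le\;\sum_{i\ge n}\frac{M\cdot 2^{p_i}}{p_i!},
\]
where, for each $i\ge 2$, we write uniquely $i=p_ik+s_i$ with $p_i\ge 0$ and $2\le s_i\le k+1$. Next I would reindex this sum by grouping together the indices $i$ sharing a common value of $p_i$. For every fixed $p\ge 0$, there are exactly $k$ integers $i\ge 2$ with $p_i=p$, namely $i=pk+2,pk+3,\dots ,pk+(k+1)$. Enlarging the partial block at $p=p_n$ (where the true range starts at $s_n$ rather than $2$) to the full block of size $k$ produces the valid upper bound
\[
|R_n|\;\le\;Mk\sum_{p\ge p_n}\frac{2^p}{p!}.
\]

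Second, I would control the remaining tail in terms of its leading term. Writing $p=p_n+j$ with $j\ge 0$,
\[
\sum_{p\ge p_n}\frac{2^p}{p!}\;=\;\frac{2^{p_n}}{p_n!}\sum_{j\ge 0}\frac{2^j\,p_n!}{(p_n+j)!}\;\le\;\frac{2^{p_n}}{p_n!}\sum_{j\ge 0}\frac{2^j}{j!}\;=\;\frac{e^2\cdot 2^{p_n}}{p_n!},
\]
where the middle inequality uses the elementary fact that $(p_n+1)(p_n+2)\cdots (p_n+j)\ge j!$ for $p_n\ge 0$. Plugging this into the previous display yields exactly the stated bound $|R_n|\le Mke^2\cdot 2^{p_n}/p_n!$.

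There is no real obstacle here: the estimate is a direct consequence of Lemma~\ref{w-upper-bound} together with the observation that the Taylor tail $\sum_{p\ge p_n}2^p/p!$ is geometrically dominated by its leading term. The only mild care needed is in the bookkeeping at the initial ``partial block'' $p=p_n$, which we handle by enlarging the range at the harmless cost of a factor $k$ (already present in the target constant $Mke^2$).
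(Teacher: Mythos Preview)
Your proof is correct and follows essentially the same approach as the paper: triangle inequality plus Lemma~\ref{w-upper-bound}, regrouping by the block index $p$ to pick up the factor $k$, and then bounding the Taylor tail $\sum_{p\ge p_n}2^p/p!$ by $e^2\cdot 2^{p_n}/p_n!$. Your explicit justification of the last step via $(p_n+1)\cdots(p_n+j)\ge j!$ is slightly more detailed than the paper's one-line appeal to the ``usual upper bound'' for the Taylor remainder, but the argument is the same.
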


\medskip

\begin{proof}
\begin{equation}
\begin{aligned}
&\left|\sum_{i\ge n} u_i^{(r)}\right|\le\sum_{i\ge n} |u_i^{(r)}|\le M\cdot\sum_{i\ge n} \frac{2^{p_i}}{p_i!} \\
&\le Mk\cdot\sum_{p\ge p_n}\frac{2^{p}}{p!}\le \frac{Mke^2\cdot 2^{p_n}}{p_n!},
\end{aligned}
\end{equation}
according to the usual upper bound for the $p_n$-th error term of the Taylor
expansion of the exponential function.
\end{proof}

\bigskip \bigskip

\section{Behavior of the Limiting Densities $D(k,r)$}

\bigskip \bigskip

We conjecture that, for a given $k$, $D(k,r)$ is decreasing in $r$,
and $kD(k,2k)>0$ is separated from $0$, uniformly in $k$.  Please keep
in mind \ref{normalization}, indicating that the proper normalization
(to get non-zero limit) of the densities $D(k,r)$ is $kD(k,r)$. As
follows, we present strong numerical evidence for this.  Such
numerical evidence is certainly feasible for, according to Corollary
\ref{fast-u-convergence}, the partial sums of the series
(\ref{t-series}) converge faster than any exponential function,
therefore all the formulas (\ref{big-recursion}), (\ref{t-formula}),
(\ref{s-formula}), and (\ref{a-formula}) converge very fast with error
terms that are easy to effectively estimate.


Of particular interest is the limiting cumulative distribution function

\begin{equation}\label{cumulative}
F(t)=\lim_{k\to\infty} \sum_{r=k}^{[(1+t)k]} D(k,r),
\end{equation}
and the corresponding limiting density function

\begin{equation}\label{density-function}
F'(t)=\lim_{k\to\infty} kD(k,[(1+t)k])
\end{equation}
for $0\le t\le 1$. 


To support these conjectures, calculations were conducted for a wide range of
$k$ values, and the results for $k = 2^{20}$ are presented in Figures 
\ref{fig:Dkr}, \ref{fig:F}, and \ref{fig:dFdt}.

\pagebreak
\begin{figure}[h!]
   \begin{center}
      \input{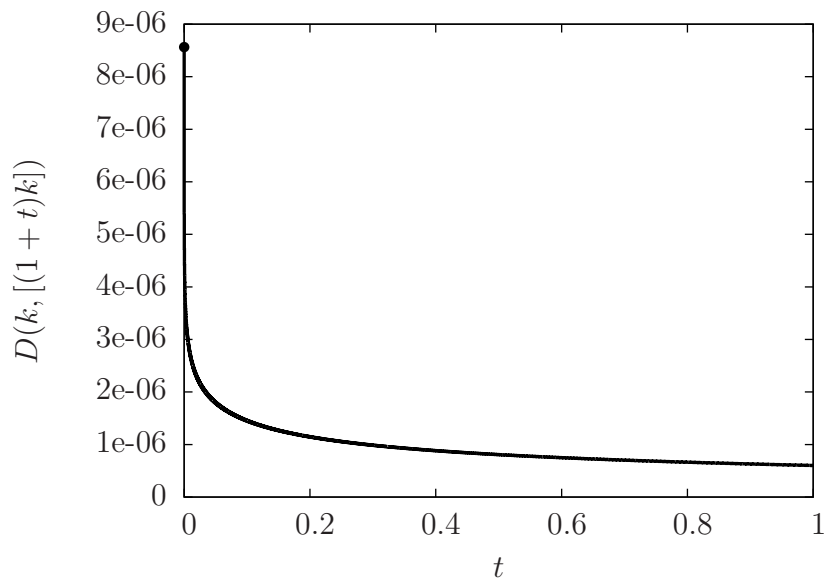}
   \end{center}
   \vspace{-12pt}
   \caption{Plot of the values of $D(k,r)$ for $k=2^{20}$ versus a normalized
   axis $t = (r - k)/k$. The maximum value obtained is denoted by the symbol
   at $t=0$.}
   \label{fig:Dkr}
\end{figure}
\begin{figure}[h!]
   \begin{center}
      \input{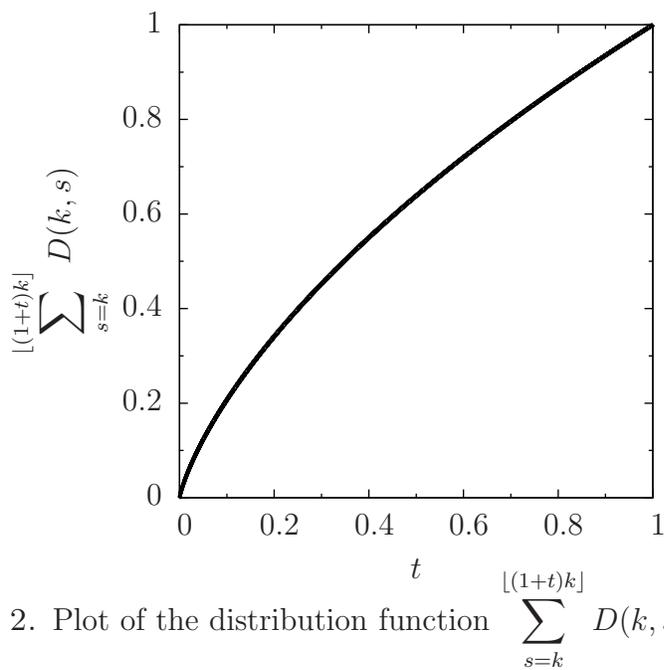}
   \end{center}
   \vspace{-24pt}
   \caption{Plot of the distribution function 
            $\displaystyle \sum_{s=k}^{\lfloor (1+t)k \rfloor} D(k,s)$ for $k=2^{20}$.}
   \label{fig:F}
\end{figure}

\pagebreak

\begin{figure}[h!]
   \begin{center}
      \input{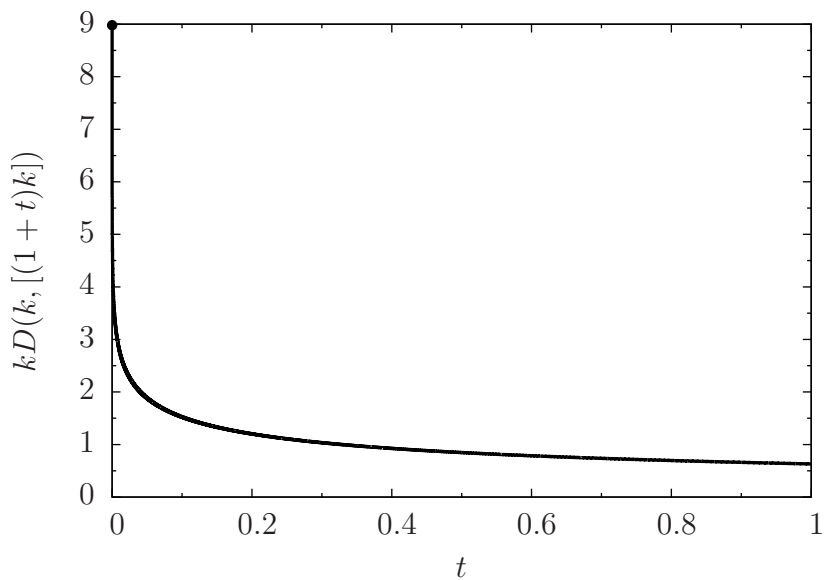}
   \end{center}
   \vspace{-12pt}
   \caption{Plot of the density function $kD(k,[(1+t)k])$ for $k=2^{20}$.
            The maximum value is at $t=0$ and is marked with the symbol.}
   \label{fig:dFdt}
\end{figure}

\noindent The density function $kD(k,[(1+t)k])$ exhibits interesting 
behavior at $t = 0$.
To demonstrate this, Figure \ref{fig:der0} presents data for $kD(k,k)$
spanning many decades of $k$.
\begin{figure}[h!]
   \begin{center}
      \input{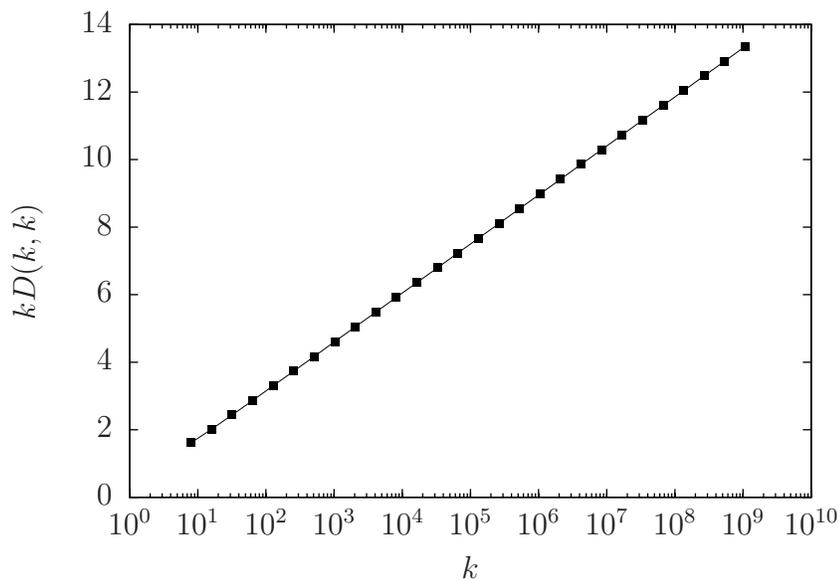}
   \end{center}
   \vspace{-12pt}
   \caption{Plot of the growth of $kD(k,k)$ as $k$ is increased. The
   values of $k$ used were $2^n$, where $3\le n \le 30$.}
   \label{fig:der0}
\end{figure}

\noindent The results given in Figure \ref{fig:der0} show that $kD(k,k)$
grows at a logarithmic rate with $k$.
Similarly, we investigate the behavior of $kD(k,2k)$ in Figure \ref{fig:der1}.
\begin{figure}[h!]
   \begin{center}
      \input{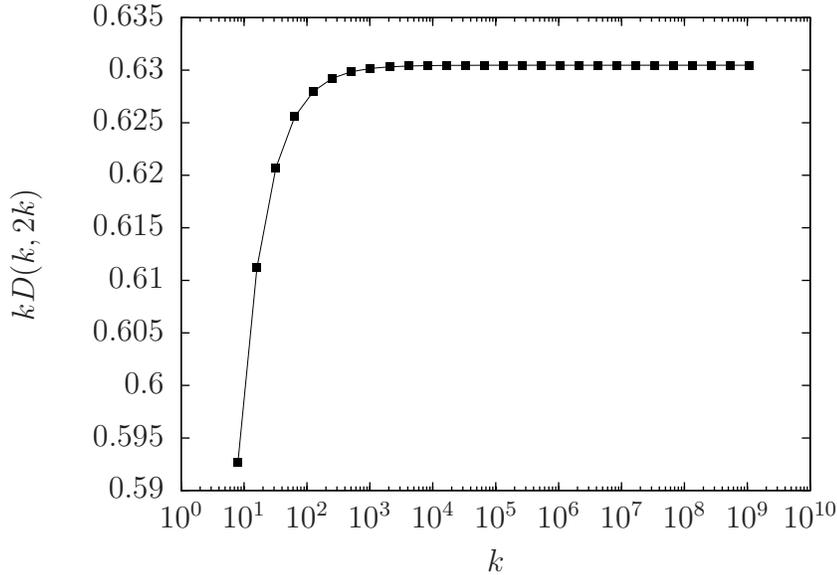}
   \end{center}
   \vspace{-12pt}
   \caption{Plot of $kD(k,2k)$ as $k$ is increased. The values of $k$ used 
            were $2^n$, where $3\le n \le 30$.}
   \label{fig:der1}
\end{figure}

\noindent The results in Figure \ref{fig:der1} support the conjecture that the
sequence $\{kD(k,2k)\}$ converges to a number $0.6304735\dots$ in a monotone 
increasing fashion as $k\to\infty$.
Figures \ref{fig:dFdt}, \ref{fig:der0}, and \ref{fig:der1} serve as
strong numerical evidence for the claim that the limiting density function 
$F'(t)$ of (\ref{density-function}) continuously decreases from infinity
at $t=0$ to a positive constant $0.6304735\dots$. Furthermore, it is worth noting that
these pictures are in pretty good harmony with the results of \S4 of \cite{Man(1976)} 
on the distribution of the gap lengths.


The number $D(k)=\sum_{r=k}^{2k} \dfrac{k+1}{r+1}D(k,r)$ has a special meaning: It is the limiting filling density of cars (i. e. $(k+1)$-blocks) getting a parking slot,
as $n\to\infty$. Clearly $\dfrac{k+1}{2k+1}\le D(k)\le 1$.

\medskip

Particularly interesting is the limiting packing density
\[
D=\lim_{k\to\infty} D(k).
\]
Clearly $1/2\le D\le 1$.
The behavior of $D$ was investigated numerically and the results are presented
in Figure \ref{fig:D}. The obtained numerical evidence supports 
the claim that $D=m=0.7475979203\dots$ is R\'enyi's famous parking constant.
\pagebreak
\begin{figure}[h!]
   \begin{center}
      \input{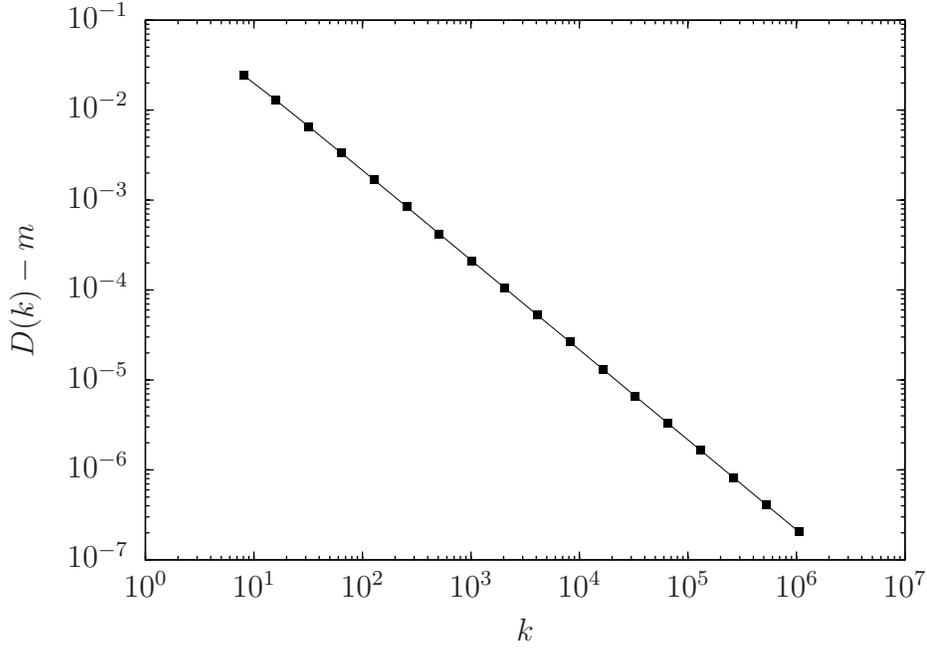}
   \end{center}
   \caption{Plot depicting the difference between the calculated values of 
            $D(k)$ and R\'{e}nyi's constant $m$ (to machine precision) versus 
            $k$. The values of $k$ used were $2^n$, where $3\le n \le 20$.}
   \label{fig:D}
\end{figure}

\subsection{Example. Detailed Calculations for $k=1$.}

If we take the case $k=r=1$, (\ref{big-recursion}) and (\ref{u-initial}) yield 
\[
u_2^{(1)}=-\dfrac{3}{20}, \quad u_n^{(1)}=\dfrac{-2(n+1)}{n(n+3)}u_{n-1}^{(1)}
\]
for $n\ge 3$, thus $u_n^{(1)}=\dfrac{3(n+1)(-2)^{n-1}}{(n+3)!}$
for $n\ge 2$, so
\begin{equation}
\begin{aligned}
& D(1,1)=4t_\infty^{(1)}=1+12\cdot\sum_{n=2}^\infty \frac{(n+3-2)(-2)^{n-1}}{(n+3)!} \\
& =1+12\cdot\sum_{n=2}^\infty \frac{(-2)^{n-1}}{(n+2)!}+12\cdot\sum_{n=2}^\infty \frac{(-2)^{n}}{(n+3)!} \\
& =1-\frac{3}{2}\left(\sum_{n=2}^\infty \frac{(-2)^{n+2}}{(n+2)!}+\sum_{n=2}^\infty \frac{(-2)^{n+3}}{(n+3)!}\right)
=1-3e^{-2}
\end{aligned}
\end{equation}
by obvious analysis technique.

Consequently, $D(1,2)=1-D(1,1)=3e^{-2}$, according to (\ref{normalization}). Finally, the exact value of the
filling density $D(1)$ is $D(1)=D(1,1)+\dfrac{2}{3}D(1,2)=1-e^{-2}$.
The filling density $D(1)=1-e^{-2}$ is in agreement with the filling density
$(1-e^{-2})/2$ obtained by Page \cite{P(1959)}.
The factor 2 discrepancy is due to the fact that we count each car with weight
$k+1=2$ (the space each car occupies), whereas Page uses weight 1.

\bigskip

\noindent {\it Remark}.
Whoever is interested in repeating the computer calculations, re-generating the
numerical plots, or checking the details in the source code of our programs, can
directly send us an e-mail message.
We will be more than happy to provide the code.

\bigskip \bigskip

\section*{Acknowledgements}

The authors express their sincere gratitude to Dr. Ya. Pesin for generously
supporting the conference participation of the first author via his NSF
Conference Grant.
Also, thanks to Steven Finch (Harvard University) for his helpful remarks.

\bigskip \bigskip

\end{document}